\newtheorem{proposition}{Proposition} 
\newcommand\beq{\begin{equation}}
\newcommand\eeq{\end{equation}}
\renewcommand{\to}{\rightarrow}
\renewcommand{\det}{\operatorname{det}}
\renewcommand{\div}{\operatorname{div}}
\newcommand{\grad}{\boldsymbol{\nabla}}
\newcommand{\ba}{\boldsymbol{a}} 
\newcommand{\bA}{\boldsymbol{A}} 
\newcommand{\bc}{\boldsymbol{c}}
\newcommand{\bC}{\boldsymbol{C}}
\def\bD{\boldsymbol{D}}
\newcommand{\be}{\boldsymbol{e}}
\newcommand{\boldf}{\boldsymbol{f}}
\newcommand{\bF}{\boldsymbol{F}}
\newcommand{\bI}{\boldsymbol{I}} 
\def\bS{\boldsymbol S}
\newcommand{\bu}{\boldsymbol{u}}
\newcommand{\gbu}{{\grad\bu}}
\newcommand{\bx}{\boldsymbol{x}} 
\newcommand{\bY}{\boldsymbol{Y}}
\newcommand{\bphi}{\boldsymbol{\phi}}
\newcommand{\bsigma}{\boldsymbol{\sigma}}
\newcommand{\btau}{\boldsymbol{\tau}}
\def\Bcal{\mathcal{B}}
\newcommand{\RR}{\mathbb{R}} \newcommand{\R}{\mathbb{R}} 
\newcommand{\bzero}{{\boldsymbol{0}}}
\title{
 A viscoelastic 
 flow model of Maxwell-type \\
 with a symmetric-hyperbolic formulation
}
\author{S\'ebastien Boyaval}
\address{LHSV, Ecole des Ponts, EDF R\&D, Chatou, France, sebastien.boyaval@enpc.fr}
\address{MATHERIALS, Inria, Paris, France}
\subjclass[2010]{76A10; 35L45; 74D10}
\begin{document}

\begin{abstract}
Maxwell models for viscoelastic 
flows are famous 
for their potential 
to unify 
elastic motions of solids with viscous motions of liquids
in the continuum mechanics perspective.
But the usual Maxwell models allow one to define well motions mostly for \emph{one-dimensional} 
flows only.
To define unequivocal \emph{multi-dimensional} viscoelastic flows 
(as solutions to well-posed 
initial-value problems) 
we advocated in [ESAIM:M2AN 55 (2021) 807-831]
an upper-convected Maxwell model for compressible 
flows with a symmetric-hyperbolic formulation.
Here, that model is derived again, with new details.
\end{abstract}




\maketitle


\section{Elastic and viscous motions 
in the continuum perspective}
\label{sec:intro}

First, let us recall seminal systems of PDEs that unequivocally model the 
motions $\bphi_t: \Bcal \to \subset\RR^3$ 
of 
\emph{continuum bodies} 
$\Bcal$ 
on a time range $t\in[0,T)$. 
%
PDEs governing \emph{elastic flows} 
are a starting point 
for all continuum bodies. 
PDEs governing \emph{viscoelastic flows}, 
for liquid 
bodies in particular, shall come next in Section~\ref{sec:maxwell}.

\bigskip

Let us denote $\{x^i, i=1\dots 3\}$ a Cartesian coordinate system for the Euclidean ambiant space $\RR^3$.
Let us assume, for $t\in[0,T)$,
that $\Bcal$ is a manifold equipped with a Cartesian coordinate system $\{a^\alpha, \alpha=1\dots d\}$ ($d\in\{1,2,3\}$),
and that $\bphi_t(\ba \equiv a^\alpha \be_\alpha) =  \phi^i_t(\ba) \be_i$
is a 
bi-Lipshitz function on $\Bcal\ni\ba$. 
%
Given a vector force field $\boldf$ in $\RR^3$, Galilean physics requires
the \emph{deformation gradient} 
$F^i_\alpha := \partial_\alpha \phi^i_t \circ \bphi_t^{-1}$ 
and the 
\emph{velocity} $u^i := \partial_t \phi^i_t \circ \bphi_t^{-1}$,
to satisfy the conservation of linear momentum:
\begin{equation}
\label{eq:momentum_material}
\hat\rho \partial_t (\bu\circ\bphi_t) = \div_{\ba} \bS + \hat\rho (\boldf\circ\bphi_t) \text{ on } \Bcal
\end{equation} 
given a mass-density $\hat\rho(\ba)\ge0$, 
see e.g. \cite{marsden-hughes-2012-mathematical}.
Neglecting heat transfers, the first Piola-Kirchoff stress tensor $\bS(\bF)$ is defined
by an internal energy functional $e(\bF)$: 
\begin{equation}
\label{eq:2Dpiolakirchhof}
S^{i\alpha}  = \hat\rho \partial_{ F^i_\alpha } e \,.
\end{equation}
Then, when $\hat\rho\in\R^+_*$ is constant, 
motions can be unequivocally defined 
by 
solutions 
$$(u^i\circ\bphi_t,F^i_\alpha\circ\bphi_t)\in C_t^0\left([0,T),H^s(\RR^3)^3\times H^s(\RR^3)^{3\times3}\right) \text{ with } s>\frac32 $$
to (\ref{eq:momentum_material}--\ref{eq:2Dpiolakirchhof}) complemented by 
(\ref{eq:deformation_material}--
\ref{eq:cofactor_material}), if (\ref{eq:momentum_material}--\ref{eq:cofactor_material})
defines a \emph{symmetric-hyperbolic} system 
\cite{DafermosBook4}, 
\begin{align}
\label{eq:deformation_material}
\partial_t \left(F^i_\alpha\circ\bphi_t \right)- \partial_\alpha \left( u^i\circ\bphi_t \right)= 0
\\
\label{eq:determinant_material}
\partial_t \left(|F^i_\alpha|\circ\bphi_t \right)- \partial_{\alpha}\left( C^i_\alpha\circ\bphi_t \: u^i\circ\bphi_t \right) = 0
\\ 
\label{eq:cofactor_material}
\partial_t \left( C^i_\alpha\circ\bphi_t \right) + \sigma_{ijk}\sigma_{\alpha\beta\gamma}\partial_\beta \left( F^j_\gamma\circ\bphi_t \: u^k\circ\bphi_t \right) = 0 
\end{align}
denoting $\sigma_{ijk}$ Levi-Civita's symbol.
%
But for physical applications, it is difficult to identify  functionals $e(\bF)$ such that 
(\ref{eq:momentum_material}--\ref{eq:cofactor_material}) 
defines a \emph{symmetric-hyperbolic} system.

In the sequel, 
assuming $\hat\rho\in\R^+_*$ constant, 
we recall how 
one standardly defines $e(\bF)$ for solids and fluids dynamics, 
on considering the determinant $|F^i_\alpha|$ of the deformation gradient (also denoted $|\bF|$ hereafter)
and the cofactor matrix $C^i_\alpha$ 
of $F^i_\alpha$ ($\bC$ in tensor notation) as variables independent of $\bF$.
Next, in Section \ref{sec:maxwell}, we recall with much details the function $e(\bF)$ that we proposed in \cite{Boyaval2021} 
so as to properly define a viscoelastic dynamics of Maxwell type that 
\emph{unifies} solids and fluids. 

\subsection{Polyconvex elastodynamics} 

If $e(\bF)$ 
in \eqref{eq:2Dpiolakirchhof} is \emph{polyconvex}, 
and 
if the initial conditions for 
$(\bu,\bF,|\bF|,\bC)\circ\bphi_t$ 
are given 
by $\left(\partial_t\bphi_t,\nabla_{\ba}\bphi_t,|\nabla_{\ba}\bphi_t|,\mathop{\rm Cof}(\nabla_{\ba}\bphi_t)\right)(t=0)
\in H^s(\RR^3)$ with $s>\frac32$, such that $\nabla_{\ba}\times\bF=\bzero=\div_{\ba}\bC$ holds 
i.e 
\begin{equation}
\label{eq:piola_material}
\sigma_{\alpha\beta\gamma} \partial_{\alpha} F^i_\beta = 0 = \partial_\alpha C^i_\alpha \quad \forall i \,,
\end{equation}
then 
(\ref{eq:momentum_material}--\ref{eq:cofactor_material})
enters the framework of symmetric-hyperbolic systems. 
In particular, \emph{a unique 
time-continuous solution} can be built 
in $H^s(\RR^3)$ for $t\in[0,T)$,
given initial conditions 
$F^i_\alpha(t=0)\in H^s(\RR^3)^{3\times3}$ and $u^i(t=0)\in H^s(\RR^3)^3$ \cite{DafermosBook4}. 
The latter solution, associated with a unique mapping $\bphi_t$, is equivalently defined 
by 
\cite{wagner-2009}
\begin{align}
\label{eq:momentum_spatial}
& \partial_t \left( \rho u^i \right) +  \partial_j \left( \rho u^i u^j - \sigma^{ij} \right) = \rho f^i
\\
\label{eq:deformation_spatial} 
& \partial_t \left( \rho F^i_\alpha \right) +  \partial_j \left( \rho F^i_\alpha u^j - \rho u^i F^j_\alpha \right) = 0
\\
\label{eq:determinant_spatial} 
& \partial_t \rho + \partial_j\left( \rho u^j \right) = 0 
\\
\label{eq:cofactor_spatial}
& \partial_t \left( \rho C^i_\alpha \right) +  \partial_i \left( \rho C^j_\alpha u^j  \right) = 0 
\end{align}
where 
$\sigma^{ij} := |\bF|^{-1}S^{i\alpha}F^j_\alpha $ 
and $\rho := |\bF|^{-1} \hat\rho$, 
provided the initial conditions satisfy
\begin{equation}
\label{eq:piola_spatial}
\partial_{j}( \rho 
F^j_\alpha ) = 0 = \sigma_{ijk}\partial_j(\rho C^k_\alpha) \quad \forall \alpha \,.
\end{equation}

Indeed, with 
the \emph{Eulerian} description (\ref{eq:momentum_spatial}--\ref{eq:cofactor_spatial}) of the body motions 
(i.e. in spatial coordinates, as opposed to the Lagrangian description
(\ref{eq:momentum_material}--\ref{eq:cofactor_material}) in material coordinates)
\begin{align}
\label{eq:momentum_spatial_tens}
& \partial_t \left( \rho \bu \right) +  \div \left( \rho \bu\otimes\bu - \bsigma \right) = \rho \boldf
\\
\label{eq:deformation_spatial_tens}
& \partial_t \left( \rho \bF \right) -  \grad \times \left( \rho \bF^T\times\bu \right) = \bzero
\\
\label{eq:determinant_spatial_tens} 
& \partial_t \rho + \div \left( \rho \bu \right) = 0 
\\
\label{eq:cofactor_spatial_tens}
& \partial_t \left( \rho \bC \right) +  \grad \otimes \left( \rho \bC^T\cdot\bu \right) = \bzero 
\end{align}
where $\bC^T$ is the dual (matrix transpose) of $\bC$, and with Piola's identity (\ref{eq:piola_spatial})
\begin{equation}
\label{eq:piola_spatial_tens}
\div( \rho \bF^T ) = \bzero = \grad \times (\rho \bC^T) \:,
\end{equation}
one can show that, \emph{when $e(\bF)$ is polyconvex}, 
the symmetric-hyperbolic framework applies to (\ref{eq:momentum_spatial_tens}--\ref{eq:piola_spatial_tens})
insofar as smooth solutions also 
satisfy the 
conservation law
\begin{equation}
\label{eq:energy_spatial}
\partial_t \left( \frac\rho2 |\bu|^2 + \rho e \right) +  
\div \left( \left( \frac\rho2 |\bu|^2 + \rho e \right)\bu - \bsigma\cdot\bu 
\right) = \rho\boldf\cdot\bu
\end{equation}
for $ \frac\rho2 |\bu|^2 + \rho e$, a functional \emph{convex} 
in a set of independent conserved variables \cite{DafermosBook4}.

A first example of a physically-meaningful internal energy is the \emph{neo-Hookean}
\begin{equation}
\label{eq:neohookean}
e(F^k_\alpha F^k_\alpha):=\frac{c_1^2}2 ( F^k_\alpha F^k_\alpha 
-d)
\end{equation} 
with $c_1^2>0$.
Then, the quasilinear system (\ref{eq:momentum_material}--\ref{eq:deformation_material}) is symmetric-hyperbolic 
insofar as smooth solutions additionally satisfy a conservation law for $|\bu|^2/2 + e$ 
strictly convex in 
$(\bu,\bF)$. Unequivocal 
motions can be defined\footnote{
    Not only with fields in $H^s(\RR^3)$, $s>\frac32$, for $t\in[0,T)$,
    but in fact \emph{whatever $T>0$} 
    and $s\in\RR$ 
    here, insofar as $S^i_\alpha(\bF) = \hat\rho c_1^2 
    F^i_\alpha$ 
    so the Lagrangian description (\ref{eq:momentum_material}--\ref{eq:deformation_material})
    reduces to \emph{linear} PDEs. 
}, 
equivalently by (\ref{eq:momentum_spatial_tens}--\ref{eq:deformation_spatial_tens}).
%
%
The latter neo-Hookean model satisfyingly predicts the small motions of some solids. 

However, 
\eqref{eq:neohookean} is oversimplistic 
: it does not model 
the deformations that are often observed 
orthogonally to a stress applied unidirectionally, see e.g. \cite{treloar2005physics} regarding rubber.
Many observations 
are better fitted when the Cauchy stress $\bsigma$ contains an additional spheric term $-p\bI$,
with a pressure $p(\rho)$ function of volume changes.

Next, instead of \eqref{eq:neohookean}, one can rather assume a \emph{compressible neo-Hookean} energy
\begin{equation}
\label{eq:neohookeancomp}
e(F^k_\alpha F^k_\alpha):=\frac{c_1^2}2 
( F^k_\alpha F^k_\alpha 
-d) - \frac{d_1^2}{1-\gamma} |\bF|^{1-\gamma} =: \tilde e(|\bF|,\bF) \,. 
\end{equation} 
The 
functional (\ref{eq:neohookeancomp}) is polyconvex 
as soon as $\gamma>1$ \cite{DafermosBook4}. 
Thus, using either (\ref{eq:momentum_material}--\ref{eq:cofactor_material}) 
or (\ref{eq:momentum_spatial}--\ref{eq:cofactor_spatial}) 
one can define unequivocal smooth motions 
with 
$S^i_\alpha(\bF) = \hat\rho c_1^2 F^i_\alpha - \hat\rho d_1^2 |\bF|^{-\gamma} \mathop{\rm Cof(\bF)}^i_\alpha$
where an additional pressure term arises\footnote{
 And thus the flux becomes nonlinear in the conservative variables, 
 so $T>0$ is definitely finite.
} in comparison with \eqref{eq:neohookean}.
Precisely, one can build unique solutions to a \emph{symmetric} reformulation of a 
system of conservation laws for conserved variables $U(t,\bx):\R^+\times\RR^m\to\RR^n$ i.e.
\begin{equation}
\label{eq:sys}
\partial_tU + \partial_\alpha G_\alpha(U) = 0
\end{equation}
with $k$ involutions $M_\alpha \partial_\alpha U=0$,
$M_\alpha\in\R^{k\times n}$ 
i.e. $M_\alpha G_\beta(U) = - G_\alpha(U) M_\beta$, 
$\alpha\neq\beta$. 

An additional conservation law $\partial_t\eta(U) + \partial_\alpha Q_\alpha(U) = 0$ is satisfied by 
\eqref{eq:sys},
for $\eta(U)=\frac{|\bu|^2}2+\tilde e(|\bF|,\bF)$ a strictly convex functional of $U$.
So a smooth function $\Xi(U)\in\R^{k}$ exists such that $DQ_\alpha(U) = D\eta(U) DG_\alpha(U) + \Xi(U)^TM_\alpha$ holds,
$D^2\eta(U) DG_\alpha(U) + D\Xi(U)^TM_\alpha$ is a symmetric matrix, and 
\eqref{eq:sys} admits a symmetric-hyperbolic reformulation. The 2D 
Lagrangian case 
$\alpha\in\{a,b\}$, $c_1^2 \hat\rho\equiv 1$, reads
\begin{align}
& \partial_t u^x+\partial_a\left(F^y_b p - F^x_a \right)+\partial_b\left(-F^y_a p - F^x_b\right) = 0,
\\
& \partial_t u^y+\partial_a\left(-F^x_b p - F^y_a \right)+\partial_b\left(F^x_a p - F^y_b\right) = 0,
\\
& \partial_t |\bF| = 
\partial_a\left(-F^x_bu^y+F^y_bu^x\right) + \partial_b\left(-F^y_au^x+F^x_au^y\right), 
\\
& \partial_t F^x_a-\partial_au^x = 0,\\  
& \partial_t F^x_b-\partial_bu^x = 0,\\
& \partial_t F^y_a-\partial_au^y = 0,\\  
& \partial_t F^y_b-\partial_bu^y = 0,
\end{align}
with 
$p(|\bF|) := - \partial_{|\bF|}\tilde e \equiv \frac{d_1^2}{c_1^2} |\bF|^{-\gamma}$, 
abusively denoting $(\bu,|\bF|,\bF)$ 
the functions  $(\bu,|\bF|,\bF)\circ\bphi_t$ of material coordinates as usual. 
Involutions $M_\alpha \partial_\alpha U=0$ hold with
$$
M_a = 
\begin{pmatrix}
 0 & 0 & 0 & 0 & 1 & 0 & 0 \\
 0 & 0 & 0 & 0 & 0 & 0 & 1
\end{pmatrix}
\quad
M_b = 
\begin{pmatrix}
 0 & 0 & 0 &-1 & 0 & 0 & 0 \\
 0 & 0 & 0 & 0 & 0 &-1 & 0
\end{pmatrix}
\;.
$$
They combine with \eqref{eq:sys} using
$\Xi(U)^T
=\begin{pmatrix}
p u^y & -p u^x
\end{pmatrix}
$
to yield a symmetric 
system after premultiplication by $D^2\eta(U)$: note $\nu_\alpha(D^2\eta(U) DG_\alpha(U) + D\Xi(U)^TM_\alpha)$ 
reads
\begin{equation}
\label{eq:matrix}
\begin{pmatrix}
 0 & 0 & (\be_x\bC\nu)\partial_{|\bF|}p & -\nu_a & -\nu_b & 0 & 0 \\
 0 & 0 & (\be_y\bC\nu)\partial_{|\bF|}p & 0 & 0 & -\nu_a & -\nu_b \\
(\be_x\bC\nu)\partial_{|\bF|}p & (\be_y\bC\nu)\partial_{|\bF|}p & 0 
& 0 & 0 & 0 & 0 \\
-\nu_a & 0 & 0 & 0 & 0 & 0 & 0 \\
-\nu_b  & 0 & 0 & 0 & 0 & 0 & 0  \\
 0 &-\nu_a & 0 & 0 & 0 & 0 & 0 \\
 0 &-\nu_b & 0 & 0 & 0 & 0 & 0
\end{pmatrix}
\end{equation}
denoting $\be_x\bC\nu \equiv F^y_b\nu_a-F^y_a\nu_b$, $\be_y\bC\nu \equiv -F^x_b\nu_a+F^x_a\nu_b$
and $\nu^T=(\nu_a \; \nu_b)\in\R^m$ a unit vector.
The symmetric formulation allows one to establish the key energy estimates in the existence proof of smooth solutions
\cite{DafermosBook4}, 
as well as weak 
self-similar solutions to the 1D Riemann problem
using generalized eigenvectors $R$ solutions to 
\begin{equation}
\label{eq:eig}
\nu_\alpha \left(D^2\eta(U) DG_\alpha(U) + D\Xi(U)^TM_\alpha\right)R = \sigma D^2\eta(U) R
\end{equation}
with 
eigenvalues $\sigma\in\{0,\pm1,\pm\sqrt{1+ (|\be_x\bC\nu|^2+|\be_y\bC\nu|^2)\partial_{|\bF|}p }\}$. 
For application to real materials\footnote{
 So far, the only parameters to be specified for real application are $\hat\rho$, 
 $c_1^2$ and $d_1^2$. 
}, one important question remains: how to choose 
$c_1^2$ and $d_1^2$. 

In most real applications of elastrodynamics, the \emph{material parameters} $c_1^2$ and $d_1^2$ should vary,
as functions of $\bF$ e.g., but also as functions of an additional \emph{temperature} variable
so as to take into account microscopic 
processes not described by the macroscopic elastodynamics system.
For instance, the deformations endured by 
stressed elastic solids increase with temperature,
until 
the materials become viscous liquids. 
Then, one natural question arises: could \eqref{eq:neohookeancomp} remain useful for liquids
which are mostly incompressible (i.e. $\div\bu\approx0$ holds) and much less elastic than solids~?

In Sec.~\ref{sec:fluid}, 
we recall the limit case when the volumic term dominates the internal energy,
and 
$p 
= 
C_0 \rho^{\gamma}$ 
dominates 
$\bsigma$, 
which coincides with seminal PDEs for \emph{perfect fluids} (fluids without viscosity).
In Section~\ref{sec:maxwell}, we next consider how to rigorously connect fluids like
liquids to solids using an enriched elastodynamics system. 

\subsection{
Fluid 
dynamics} 
\label{sec:fluid}

Consider the general Eulerian description (\ref{eq:momentum_spatial_tens}--\ref{eq:cofactor_spatial_tens})
for continuum body motions.
It is noteworthy that given $\bu$, each \emph{kinematic} equation \eqref{eq:cofactor_spatial},
\eqref{eq:deformation_spatial} and \eqref{eq:determinant_spatial} is autonomous.
As a consequence, \emph{in spatial coordinates}, motions 
can be defined by \emph{reduced versions} 
of the full Eulerian description (\ref{eq:momentum_spatial}--\ref{eq:cofactor_spatial}),
with an internal energy $e$ strictly convex in $\rho$ 
but not 
in $\bF$~!
One famous case  
is the \emph{polytropic law}
\beq
\label{eq:polytropic}
e(\rho) 
:= 
\frac{C_0}{\gamma-1} \rho^{\gamma-1}
\eeq
with 
$C_0>0$.
Then, one obtains Euler's system 
for perfect (inviscid) fluids
\beq
\label{eq:firstorder_spatial_reduced}
\begin{aligned}
& \partial_t \rho + \partial_i( u^i \rho ) = 0 
\\
& \rho \left( \partial_t u^i + u^j \partial_j u^i \right) + \partial_i \: p = \rho f^i
\end{aligned}
\eeq
with a pressure $p:=-\partial_{\rho^{-1}}e=C_0 \rho^\gamma$ 
characterizing \emph{spheric} stresses: 
\beq
\label{eq:cauchystress_pressure}
\sigma^{ij} 
= -p\: \delta_{ij} \,.
\eeq 
The system \eqref{eq:firstorder_spatial_reduced} is symmetric-hyperbolic.
It 
is useful 
to define unequivocal time-evolutions of Eulerian fields (on finite time ranges) \cite{DafermosBook4}, 
although multi-dimensional 
solutions are then not equivalently described by one well-posed 
Lagrangian description \cite{DespresMazeran2005}.
In fact, for application to real fluids, the system \eqref{eq:firstorder_spatial_reduced} is 
better understood as the limit of a kinetic model based on Boltzmann's statistical description of 
molecules \cite{MR2182829}, and the model indeed describes gaseous 
fluids better than condensed fluids 
(liquids). 
In any case, the fluid model \eqref{eq:firstorder_spatial_reduced} still lacks viscosity.

One classical approach 
adds 
viscous stresses as an \emph{extra-stress} term $\btau$ in 
\eqref{eq:cauchystress_pressure} i.e. 
\beq
\label{eq:extra}
\bsigma=-p{\boldsymbol\delta}+\btau \,. 
\eeq
The extra-stress is required symmetric (to preserve angular momentum), 
objective (for the sake of Galilean invariance),
and ``dissipative'' (to satisfy thermodynamics principles) \cite{Coleman-Noll1963}.
Precisely, 
introducing the entropy $\eta$ as an additional 
state variable 
for 
heat exchanges 
at temperature $\theta=\partial_s e>0$, thermodynamics 
requires
$$
\partial_t \eta + ( u^j \partial_j ) \eta = \mathcal{D}/\theta 
$$
with a \emph{dissipation} term $\mathcal{D} \ge 0$.
Usually, denoting $D(u)^{ij} := \frac12\left(\partial_i u^j + \partial_j u^i\right)$, 
one then postulates a \emph{Newtonian} extra-stress 
with two constant parameters $\ell,\dot{\mu}>0$
\beq
\label{eq:extranewt}
\tau^{ij} = 2\dot{\mu} D(u)^{ij} + \ell \: D(u)^{kk} \: \delta_{ij}
\eeq
which satisfies 
$\mathcal{D} \equiv \tau^{ij} \partial_j u^i \ge0 $ 
\cite{Coleman-Noll1963}. 
The  Newtonian model
allows for the definition of causal motions 
through the resulting 
Navier-Stokes equations. But it is not obviously unified with elastodynamics; 
and letting alone 
that \eqref{eq:extranewt} is 
far from some real ``non-Newtonian'' materials,
it implies that shear waves 
propagate infinitely-fast, an idealization that is also a difficulty for the unification with elastodynamics.

By contrast, Maxwell's viscoelastic fluid models for $\btau$ possess well-defined shear waves of finite-speed, 
and they can be connected with elastodynamics 
with a view to unifying solids and fluids (liquids) in a single continuum description.


\section{Viscoelastic flows 
with Maxwell fluids}
\label{sec:maxwell}

Maxwell's 
models \cite{maxwell-1874} with viscosity $\dot\mu>0$, relaxation time $\lambda>0$, time-rate $\stackrel{\Diamond}{\btau}$
\begin{equation}
\label{eq:UCMmodified} 
\lambda \stackrel{\Diamond}{\btau} 
+ \btau = 2 \dot{\mu}  \bD(\bu)
\end{equation}
are widely recognized as physically useful to link fluids 
where $\btau \xrightarrow{\lambda\to0} 2 \dot{\mu}  \bD(\bu)$ in the Newtonian limit, 
with solids governed by elastodynamics when $\lambda\sim\dot{\mu}\to\infty$.
In particular, one often considers the Upper-Convected Maxwell (UCM) model, with objective time-rate 
$\stackrel{\Diamond}{\btau}$ in \eqref{eq:UCMmodified} 
defined 
by the Upper-Convected (UC) derivative\footnote{
 Other objective derivatives than UC can be used, which also allow symmetric-hyperbolic reformulations.
 They will not be considered here for the sake of simplicity.
}:
\begin{equation}
\label{eq:UC}
\stackrel{\triangledown}{\btau} := \partial_t \btau + (\bu\cdot\grad) \btau - (\gbu)\btau  - \btau(\gbu)^T
\end{equation} 
because $\stackrel{\triangledown}{\btau}=2\frac{\dot{\mu}}{\lambda}\bD(\bu)$
is compatible with elastodynamics when 
$\btau=\frac{\dot{\mu}}{\lambda}\left(\bF\bF^T-\bI\right)$.

However, a 
difficulty arises with the quasilinear 
system
\eqref{eq:momentum_spatial_tens}--\eqref{eq:determinant_spatial_tens}--\eqref{eq:extra}--\eqref{eq:UCMmodified}--\eqref{eq:UC}
to define general \emph{multi-dimensional} motions for any $\lambda\in(0,\infty)$ from solutions to Cauchy problems: 
the system may not be hyperbolic 
and numerical simulations may become unstable \cite{owens-philips-2002}.
As a cure,
we proposed in \cite{Boyaval2021} a symmetric-hyperbolic reformulation of 
\eqref{eq:momentum_spatial_tens}--\eqref{eq:determinant_spatial_tens}--\eqref{eq:extra}--\eqref{eq:UCMmodified}--\eqref{eq:UC}
using a new variable $\bA$ in $\btau=\rho(\bF\bA\bF^T-\bI)$.

We review the reformulation in Sec.~\ref{sec:new}, 
after 
recalling in Sec.~\ref{sec:viscoelastic}
well-known 1D solutions to \eqref{eq:momentum_spatial_tens}--\eqref{eq:determinant_spatial_tens}--\eqref{eq:extra}--\eqref{eq:UCMmodified}--\eqref{eq:UC}
which show the interest for Maxwell's models.

\subsection{Viscoelastic 1D shear waves for solids and fluids 
} 
\label{sec:viscoelastic}

Some particular solutions to \eqref{eq:momentum_spatial_tens}--\eqref{eq:determinant_spatial_tens}--\eqref{eq:extra}--\eqref{eq:UCMmodified}--\eqref{eq:UC}
unequivocally model viscoelastic flows, and rigorously link solids to fluids. 
%
Shear waves 
e.g. for a 2D body 
moving along $\be_x\equiv \be_{x^1}$ 
following $b=y\equiv x^2$, $a=x-X(t,y)$, $X(0,y)=0$ 
are well-defined by \eqref{eq:momentum_spatial} 
i.e.
\begin{equation}
\label{eq:momentum1D}
\partial_t u = \partial_y \tau^{xy} 
\end{equation}
where we recall $u:=\partial_t X$, and Maxwell's constitutive relation \eqref{eq:UCMmodified} i.e.
\begin{equation}
\label{eq:maxwell1D}
\lambda \partial_t \tau^{xy} + \tau^{xy} = \dot{\mu} \partial_y u \,, 
\end{equation}
given enough initial and boundary conditions.
Denoting $G:=\frac{\dot{\mu}}\lambda>0$ the shear elasticity, \eqref{eq:momentum1D}--\eqref{eq:maxwell1D} indeed coincides with the famous hyperbolic
system for 1D damped waves, which implies
$\lambda \partial^2_{tt} u(t,y) + \partial_t u(t,y) =  \dot{\mu} \partial^2_{yy} u(t,y)$ 
and
$ \lambda \partial^2_{tt} \tau^{xy}(t,y) + \partial_t \tau^{xy}(t,y) =  \dot{\mu} \partial^2_{yy} \tau^{xy}(t,y) $. 
Time-continuous solutions to \eqref{eq:momentum1D}--\eqref{eq:maxwell1D} are well defined given initial conditions %
plus possibly boundary conditions when the body has finite dimension along $\be_y\equiv \be_{x^2}$,
such as $y \equiv {x^2} >0$ 
in Stokes first problem see e.g. \cite{PREZIOSI1987239}.
Moreover, the latter 1D shear waves rigorously unify solids and fluids insofar as they are \emph{structurally stable} \cite{Payne2001,HYP2022Boyaval}:
when $\lambda\equiv \frac1G \dot{\mu}\to \infty$, they satisfy
$$
\partial^2_{tt} \tau^{xy} = G \partial^2_{yy} \tau^{xy} \quad \partial^2_{tt} u = G \partial^2_{yy} u 
$$
like elastic solids, and when $\lambda\to0$, they satisfy 
$$
\tau^{xy} = \dot{\mu} \partial_{y} u \quad \partial_t u = \dot{\mu} \partial^2_{yy} u 
$$
like viscous liquids. 
So the 1D shear waves illustrate well the structural capability of Maxwell's model to unify solid and Newtonian fluid motions.

But a problem arises with \emph{multi-dimensional} motions: 
solutions to \eqref{eq:momentum_spatial_tens}--\eqref{eq:determinant_spatial_tens}--\eqref{eq:extra}--\eqref{eq:UCMmodified}--\eqref{eq:UC} are not well-defined in general.

\subsection{Maxwell flows with a symmetric-hyperbolic formulation} 
\label{sec:new}

To establish multi-dimensional motions satisfying \eqref{eq:UCMmodified},
we introduced in \cite{Boyaval2021} a 2-tensor $\bA$: 
\begin{equation}
\label{eq:Aeul}
\lambda (\partial_t +\bu\cdot\grad)\bA  + \bA = \bF^{-1}\bF^{-T}
\end{equation}
which can be understood as a 
material 
property that relaxes in fluid flows.
\begin{proposition}
\label{prop:UCMmodified}
 Set $\dot\mu=\lambda c_1^2$. Then $\btau:= \rho c_1^2(\bF\bA\bF^T-\bI)$ satisfies \eqref{eq:UCMmodified} with
    \begin{equation}
    \label{eq:LC}
    \stackrel{\Diamond}{\btau} :=
     \partial_t \btau + (\bu\cdot\grad) \btau - (\gbu)\, \btau  - \btau\, (\gbu)^T 
    + (\div\bu) \,\btau \,.
    \end{equation}
\end{proposition}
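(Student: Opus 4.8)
The plan is to insert the definition $\btau=\rho c_1^2(\bF\bA\bF^T-\bI)$ into the modified rate \eqref{eq:LC} and reduce everything to the three evolution laws already available: continuity \eqref{eq:determinant_spatial_tens}, the Eulerian deformation law \eqref{eq:deformation_spatial_tens}, and the relaxation law \eqref{eq:Aeul} for $\bA$. First I would extract the two kinematic identities I need. From \eqref{eq:determinant_spatial_tens} I get the material derivative of the density, $(\partial_t+\bu\cdot\grad)\rho=-\rho\,\div\bu$. From \eqref{eq:deformation_spatial} I would pass to nonconservative form: grouping $\partial_t(\rho F^i_\alpha)+\partial_j(\rho u^j F^i_\alpha)$ and using continuity to kill the $\partial_t\rho$ part leaves $\rho(\partial_t+\bu\cdot\grad)F^i_\alpha$, while Piola's identity \eqref{eq:piola_spatial} ($\partial_j(\rho F^j_\alpha)=0$) collapses the remaining flux to $\rho F^j_\alpha\partial_j u^i$. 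This yields the clean transport law $(\partial_t+\bu\cdot\grad)\bF=(\gbu)\bF$.

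Next I would differentiate $\bB:=\bF\bA\bF^T$ along particle paths. By Leibniz, $\dot\bB=\dot\bF\bA\bF^T+\bF\dot\bA\bF^T+\bF\bA\dot\bF^{T}$. Substituting $\dot\bF=(\gbu)\bF$ (and its transpose) turns the outer two terms into $(\gbu)\bB+\bB(\gbu)^T$, while substituting $\dot\bA=\lambda^{-1}(\bF^{-1}\bF^{-T}-\bA)$ from \eqref{eq:Aeul} and using the cancellation $\bF(\bF^{-1}\bF^{-T})\bF^T=(\bF\bF^{-1})(\bF^{-T}\bF^T)=\bI$ collapses the middle term to $\lambda^{-1}(\bI-\bB)$. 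Hence $\dot\bB=(\gbu)\bB+\bB(\gbu)^T+\lambda^{-1}(\bI-\bB)$.

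Finally I would assemble \eqref{eq:LC}, written as $\stackrel{\Diamond}{\btau}=\dot\btau-(\gbu)\btau-\btau(\gbu)^T+(\div\bu)\btau$, and expand $\dot\btau=c_1^2\dot\rho\,(\bB-\bI)+\rho c_1^2\dot\bB$. The contribution of $\dot\rho=-\rho\,\div\bu$ is exactly $-(\div\bu)\btau$, which the extra divergence term of \eqref{eq:LC} cancels; this is precisely why the modified objective rate, rather than the plain upper-convected rate \eqref{eq:UC}, must be paired with the $\rho$-weighted stress. Inserting $\dot\bB$ then makes the $(\gbu)\bB$ and $\bB(\gbu)^T$ pieces cancel against $-(\gbu)\btau-\btau(\gbu)^T$, leaving the relaxation part $\rho c_1^2\lambda^{-1}(\bI-\bB)=-\lambda^{-1}\btau$ together with the symmetric combination $\rho c_1^2(\gbu+(\gbu)^T)=2\rho c_1^2\bD(\bu)$. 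Multiplying through by $\lambda$ gives $\lambda\stackrel{\Diamond}{\btau}+\btau=2\dot\mu\,\bD(\bu)$ with $\dot\mu=\lambda c_1^2$, which is \eqref{eq:UCMmodified}.

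I expect the main obstacle to be the bookkeeping of the density $\rho$: one must simultaneously use continuity twice (once to linearize the $\bF$-law and once for $\dot\rho$) and verify that the material derivative of the prefactor $\rho$ produces exactly the divergence term hard-wired into \eqref{eq:LC}, so that the left-hand side assembles into $\lambda\stackrel{\Diamond}{\btau}+\btau$ and the density surviving on the right is the one carried by the viscosity $\dot\mu=\lambda c_1^2$ in the chosen normalization of the reference configuration. Forgetting Piola's identity \eqref{eq:piola_spatial} when deriving $\dot\bF=(\gbu)\bF$ is the other easy pitfall; once these are in place, the remainder is the Leibniz expansion and the algebraic identity $\bF\bF^{-1}\bF^{-T}\bF^T=\bI$, both routine.
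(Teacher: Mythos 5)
Your proposal is correct in its computational content and follows essentially the same route as the paper's (much terser) proof: both hinge on the transport identity $(\partial_t+\bu\cdot\grad)\bF=(\gbu)\bF$, obtained from \eqref{eq:deformation_spatial} together with Piola's identity \eqref{eq:piola_spatial} (and, as you rightly make explicit, the continuity equation \eqref{eq:determinant_spatial_tens}), followed by a direct Leibniz expansion of the material derivative of $\btau=\rho c_1^2(\bF\bA\bF^T-\bI)$ using the relaxation law \eqref{eq:Aeul}. Your identification of the role of the $(\div\bu)\,\btau$ term --- it exactly absorbs the contribution of $\dot\rho=-\rho\div\bu$ coming from the prefactor $\rho$ --- is precisely the structural point of the modified rate \eqref{eq:LC}.

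The one place where you wave your hands is the very last identification, and it deserves a caveat. Your computation correctly yields
\begin{equation*}
\lambda\stackrel{\Diamond}{\btau}+\btau \;=\; 2\,\lambda\rho c_1^2\,\bD(\bu)\,,
\end{equation*}
with a surviving factor of $\rho$ on the right-hand side. This factor cannot be disposed of by ``the chosen normalization of the reference configuration'': even with $\hat\rho\equiv1$, the Eulerian density $\rho=\hat\rho/|\bF|$ is a nonconstant field in any genuinely compressible motion, so the result matches \eqref{eq:UCMmodified} with a \emph{constant} $\dot\mu=\lambda c_1^2$ only if the right-hand side of \eqref{eq:UCMmodified} is understood as $2\dot\mu\,\rho\,\bD(\bu)$, i.e.\ with the density-dependent viscosity $\lambda\rho c_1^2$ (which is natural for compressible Maxwell fluids, cf.\ the Bollada--Phillips model cited in the paper, where the elastic modulus scales with $\rho$). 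This gloss is present in the paper's own statement and one-line proof as well, so it is not a defect of your argument relative to the paper's; but your justification of it via normalization is not correct, and the honest conclusion of the computation is the constitutive law with $\rho$-weighted viscosity.
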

\begin{proof}
 Recall that $(\partial_t +\bu\cdot\grad)\bF^T=\bF^T\cdot(\gbu)^T$ 
 holds, using \eqref{eq:deformation_spatial} and \eqref{eq:piola_spatial}.
 Then compute $(\partial_t +\bu\cdot\grad)\btau$ straightforwardly using $\btau:= \rho c_1^2(\bF\bA\bF^T-\bI)$.
\end{proof}

Noteworthily (\ref{eq:UCMmodified})-(\ref{eq:LC}) coincides with a version of Maxwell's models for compressible fluids \cite{Bollada2012}.
Moreover, it is contained in a larger 
symmetric-hyperbolic system, which allows one to rigorously define viscoelastic motions unequivocally.

\begin{proposition} 
With (\ref{eq:extra}) such that $\btau:= \rho c_1^2(\bF\bA\bF^T-\bI)$ 
and $p(\rho)+c_1^2 \rho=-\partial_{\rho^{-1}}e_0$ for 
$e_0$ strictly convex in $\rho^{-1}$, 
\eqref{eq:momentum_spatial_tens}--\eqref{eq:deformation_spatial_tens}--\eqref{eq:determinant_spatial_tens}--\eqref{eq:Aeul}
becomes 
symmetric-hyperbolic provided $\div(\rho\bF^T)=0$ and 
$\bA$ is symmetric positive-definite ($\bA \in \bS^3_{+,*}$).
\end{proposition}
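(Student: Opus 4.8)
The plan is to apply the Godunov--Mock/Friedrichs--Lax framework recalled around \eqref{eq:sys}--\eqref{eq:eig}: exhibit a strictly convex entropy $\eta(U)$ carrying a companion conservation law, so that premultiplying by $D^2\eta$ symmetrizes the principal part, the Piola involution $\div(\rho\bF^T)=\bzero$ playing exactly the role it played for the polyconvex elastodynamics of Section~\ref{sec:intro}. First I would recast the system in conservation form. Equation~\eqref{eq:Aeul} is only a transport equation, but combined with mass conservation~\eqref{eq:determinant_spatial_tens} it becomes the balance law $\lambda\big(\partial_t(\rho\bA)+\div(\rho\bu\otimes\bA)\big)=\rho(\bF^{-1}\bF^{-T}-\bA)$; hence $U=(\rho,\rho\bu,\bG,\bH)$ with $\bG:=\rho\bF$ and $\bH:=\rho\bA$ is a complete set of conserved variables, and the relaxation sits entirely in a \emph{zeroth-order} source. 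Since symmetric-hyperbolicity concerns only the principal symbol, this source cannot interfere with it.

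Next I would exhibit the entropy. Writing $\bB:=\bF\bA\bF^T$ for the conformation tensor, I would take
\[
\eta(U)=\tfrac12\rho|\bu|^2+\rho\,e_0(\rho^{-1})+\tfrac{\rho c_1^2}{2}\big(\tr\bB-\ln\det\bB-d\big).
\]
This choice is forced by the constitutive law: the matrix gradient of $\tfrac{c_1^2}{2}(\tr\bB-\ln\det\bB-d)$ is $\tfrac{c_1^2}{2}(\bI-\bB^{-1})$ and reproduces the extra stress $\btau=\rho c_1^2(\bB-\bI)$, while $\bB\mapsto\tr\bB-\ln\det\bB$ is the classical strictly convex potential on $\bS^3_{+,*}$, minimized at $\bB=\bI$.

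The heart of the proof, and the step I expect to be the main obstacle, is the strict convexity of $\eta$ in the conserved variables $U$ --- not merely in $\bB$. The kinetic part $\tfrac12|\rho\bu|^2/\rho$ is convex in $(\rho,\rho\bu)$ as in gas dynamics, and $\rho\,e_0(\rho^{-1})$ is strictly convex in $\rho$ precisely because $e_0$ is strictly convex in $\rho^{-1}$, which is what the hypothesis $p(\rho)+c_1^2\rho=-\partial_{\rho^{-1}}e_0$ secures. The delicate term is the configurational energy: in terms of $\bG,\bH$ it carries a factor $\tfrac{c_1^2}{2\rho^2}\tr(\bG^T\bG\bH)$, and $\tr(\bF\bA\bF^T)$ is \emph{not} jointly convex in $(\bF,\bA)$ by itself. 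I would show that the barrier $-\ln\det\bB$, which splits into $-\ln\det\bA$ (a barrier for the cone) and a $-\ln\det\bF$ contribution tied to $\rho$, restores strict convexity on $\bS^3_{+,*}$ --- in the same polyconvex spirit that treats $|\bF|$ as a variable independent of $\bF$ in \eqref{eq:determinant_material}--\eqref{eq:cofactor_material}. The positive-definiteness hypothesis $\bA\in\bS^3_{+,*}$ is exactly what keeps the barrier finite and the Hessian $D^2\eta$ positive-definite.

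Finally I would record the companion law. Smooth solutions satisfy $\partial_t\eta+\div(\eta\bu-\bsigma\cdot\bu)=\rho\boldf\cdot\bu-\mathcal{D}$, the dissipation arising from the relaxation: contracting $\tfrac{c_1^2}{2}(\bI-\bB^{-1})$ with the relaxing part $\stackrel{\triangledown}{\bB}=\tfrac1\lambda(\bI-\bB)$ yields $\mathcal{D}=\tfrac{\rho c_1^2}{2\lambda}\big(\tr\bB+\tr\bB^{-1}-2d\big)\ge0$ by the arithmetic--geometric mean inequality, with equality iff $\bB=\bI$. A strictly convex entropy together with its companion law and the involution $\div(\rho\bF^T)=\bzero$ then delivers the symmetric reformulation through the cited theory \cite{DafermosBook4}, the nonnegative zeroth-order dissipation being harmless for the principal symbol and, in fact, consistent with a well-posed Cauchy theory.
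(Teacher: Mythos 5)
The heart of your proposal---the claim that the barrier $-\ln\det\bB$ restores \emph{strict joint convexity} of the free energy in the conserved variables---is precisely the step that fails, and the paper says so explicitly right after its energy-balance proposition: the free energy \eqref{eq:energy} ``is not useful for well-posedness: it is not strictly convex in conserved variables.'' The failure is already visible in the scalar reduction $\bF=x>0$, $\bA=a>0$, $\bB=ax^2$: for $g(x,a)=ax^2-\ln(ax^2)$ one computes $g_{xx}=2a+2/x^2$, $g_{aa}=1/a^2$, $g_{xa}=2x$, so the Hessian determinant is $2/a+2/(a^2x^2)-4x^2$, which is negative for large $x$ (take $a=1$), and in fact vanishes identically on the equilibrium manifold $ax^2=1$ and turns negative just off it. So even in one dimension, and even arbitrarily close to equilibrium $\bB=\bI$, the log-det barrier does not compensate the indefinite cross term of $\tr(\bF\bA\bF^T)$: that term is quadratic in $\bF$ and linear in $\bA$ but not jointly convex, its mixed second derivative grows linearly in $\bF$, while the barrier's Hessian does not grow accordingly. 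Since the restriction of a jointly convex function to scalars/diagonal matrices would remain convex, this counterexample kills the matrix statement; your appeal to the ``polyconvex spirit'' is a hope, not an argument, and the Godunov--Mock symmetrization you invoke has no strictly convex entropy to work with.

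The paper's proof gets around exactly this obstruction, by a route your proposal does not contain: it uses $\div(\rho\bF^T)=\bzero$ to pass to the Lagrangian description in the unknowns $(\bu,\bF,|\bF|)$, and---this is the key idea---it replaces $\bA$ by the new unknown $\bY=\bA^{-2}$ (so $\bA=\bY^{-1/2}$ and the stored elastic term becomes $\tr(\bF\,\bY^{-1/2}\bF^T)$), whose admissibility is what the hypothesis $\bA\in\bS^3_{+,*}$ really secures. Joint convexity in $(\bF,\bY)$ of this trace function is not elementary: it is supplied by Lieb's convexity theorem (Theorem 2, p.~276 of \cite{lieb-1973}, with $r=\tfrac12$, $p=0$), and even then the resulting functional is only a ``mathematical entropy'' in the sense of Dafermos \cite{Dafermos2013}, i.e.\ it symmetrizes the system only in conjunction with the involution $\grad_{\ba}\times\bF=\bzero$. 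Your Eulerian conservative recasting of \eqref{eq:Aeul} and your dissipation identity $\tr\bB+\tr\bB^{-1}-2d\ge0$ are both correct (the latter essentially reproduces the paper's energy-balance proposition), but they concern the companion law, not hyperbolicity; without the change of unknown and Lieb's theorem, the proposal does not prove the proposition.
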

\begin{proof}
 Using $\div(\rho\bF^T)=\bzero$,
 \eqref{eq:momentum_spatial_tens}--\eqref{eq:deformation_spatial_tens}--\eqref{eq:determinant_spatial_tens}--\eqref{eq:Aeul}
 rewrites in material 
 coordinates as 
 the Lagrangian system \eqref{eq:momentum_material}--\eqref{eq:deformation_material}--\eqref{eq:determinant_material}
 plus $\lambda \partial_t \bA  + \bA = \bF^{-1}\bF^{-T}$ where 
 $$
 \bS=(p(|\bF|)\bC + c_1^2 
 \bF^{-T}) + \hat\rho c_1^2 \bF\bA = \hat\rho \partial_{\bF} \left( e_0 + \frac{c_1^2}2 \bF\bA:\bF \right) \,.
 $$
 Then, $\bA \in \bS^3_{+,*}$ allows the variable change $\bY=\bA^{-2}$. The resulting Lagrangian system 
 for $(\bu,\bF,|\bF|,\bY)$ 
 with involution $\grad_{\ba} \times \bF = \bzero$
 admits a 
 ``mathematical entropy'' \cite{Dafermos2013} so 
 it is therefore symmetric-hyperbolic.
 For details we refer to \cite{Boyaval2021}.
\end{proof}

A unique smooth solution can be 
constructed for 
\eqref{eq:momentum_spatial_tens}--\eqref{eq:deformation_spatial_tens}--\eqref{eq:determinant_spatial_tens}--\eqref{eq:Aeul}
using an initial condition satisfying $\rho |\bF|=:\hat\rho >0$, $\div(\rho\bF^T)=0$, $\bA \in \bS_{+,*}$
\cite{DafermosBook4}. 
On small time ranges, it unequivocally defines viscoelastic \emph{multi-dimensional} motions governed by the compressible UCM law
(\ref{eq:UCMmodified})-(\ref{eq:LC}) as long as hyperbolicity holds and the solution remains bounded.
Those 
motions satisfy thermodynamics 
with 
\begin{equation}
\label{eq:energy}
 e = e_0 + \frac{c_1^2}2 \left( \bF\bA:\bF - \log \det \bF\bA:\bF \right) \,. 
\end{equation}

\begin{proposition} 
With (\ref{eq:extra}), $\btau:= \rho c_1^2(\bF\bA\bF^T-\bI)$ 
and $p(\rho)+c_1^2 \rho=-\partial_{\rho^{-1}}e_0$, smooth solutions to 
\eqref{eq:momentum_spatial_tens}--\eqref{eq:deformation_spatial_tens}--\eqref{eq:determinant_spatial_tens}--\eqref{eq:Aeul}
additionally satisfy
\begin{equation*}
\label{eq:energy_spatial_UCM}
\partial_t \left( \frac\rho2 |\bu|^2 + \rho e \right) +  
\div \left( \left( \frac\rho2 |\bu|^2 + \rho e \right)\bu - \bsigma\cdot\bu 
\right) = \rho\boldf\cdot\bu + \frac{\rho c_1^2}{2\lambda}(\bI-\bc^{-1}):(\bc-\bI)
\end{equation*}
provided $\div(\rho\bF^T)=0$ and $\bA \in \bS_{+,*}$,
on denoting $\bc=\bF\bA\bF^T\in \bS_{+,*}$ .
\end{proposition}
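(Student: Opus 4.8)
The plan is to split the total energy density $\frac\rho2|\bu|^2+\rho e$ into its kinetic and internal parts, establish a balance for each, and add them so that the stress power $\bsigma:\gbu$ (the exchange term between the two) cancels. For the kinetic part I would contract the momentum balance \eqref{eq:momentum_spatial_tens} with $\bu$ and eliminate $\partial_t\rho$ with the mass balance \eqref{eq:determinant_spatial_tens}. Writing $\bu\cdot\div\bsigma=\div(\bsigma\cdot\bu)-\bsigma:\gbu$, which is legitimate because $\bsigma$ is symmetric, this gives
\begin{equation*}
\partial_t\left(\tfrac\rho2|\bu|^2\right)+\div\left(\tfrac\rho2|\bu|^2\bu-\bsigma\cdot\bu\right)=\rho\boldf\cdot\bu-\bsigma:\gbu .
\end{equation*}
It then remains to show that the internal energy obeys $\partial_t(\rho e)+\div(\rho e\,\bu)=\bsigma:\gbu+\mathcal D$, with $\mathcal D$ the relaxation term; adding the two lines makes $\bsigma:\gbu$ drop out and leaves the claimed identity.

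Since $\partial_t(\rho e)+\div(\rho e\,\bu)=\rho(\partial_t+\bu\cdot\grad)e$ by mass conservation, the heart of the argument is the material derivative of $e$ along the flow, and the key object is $\bc=\bF\bA\bF^T$. Using $(\partial_t+\bu\cdot\grad)\bF^T=\bF^T(\gbu)^T$ (established in the proof of Proposition~\ref{prop:UCMmodified}) together with \eqref{eq:Aeul}, which yields $\bF\,[(\partial_t+\bu\cdot\grad)\bA]\,\bF^T=\tfrac1\lambda(\bI-\bc)$, I would first derive the transport identity
\begin{equation*}
(\partial_t+\bu\cdot\grad)\bc=(\gbu)\,\bc+\bc\,(\gbu)^T+\tfrac1\lambda(\bI-\bc).
\end{equation*}
The two invariants in \eqref{eq:energy} then follow by direct differentiation: $(\partial_t+\bu\cdot\grad)\tr\bc=2\,\bc:\gbu+\tfrac1\lambda\tr(\bI-\bc)$, and, by Jacobi's formula, $(\partial_t+\bu\cdot\grad)\log\det\bc=\tr\!\bigl(\bc^{-1}(\partial_t+\bu\cdot\grad)\bc\bigr)=2\,\div\bu+\tfrac1\lambda\tr(\bc^{-1}-\bI)$. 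Finally $e_0$ depends on $\rho$ alone, so $(\partial_t+\bu\cdot\grad)e_0=\rho^{-1}\partial_{\rho^{-1}}e_0\,\div\bu$ after using $(\partial_t+\bu\cdot\grad)\rho=-\rho\,\div\bu$.

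Assembling these pieces, the convective contributions $(\gbu)\bc+\bc(\gbu)^T$ produce the reversible shear power $\rho c_1^2\,\bc:\gbu$, and these, the $e_0$ term, and the volumetric $\div\bu$ terms are meant to reconstruct precisely $\bsigma:\gbu$; this is where the constitutive relation $p(\rho)+c_1^2\rho=-\partial_{\rho^{-1}}e_0$ and the form $\bsigma=-p\bI+\btau$ with $\btau=\rho c_1^2(\bc-\bI)$ enter. The relaxation contributions combine into
\begin{equation*}
\tfrac{\rho c_1^2}{2\lambda}\bigl[\tr(\bI-\bc)-\tr(\bc^{-1}-\bI)\bigr]=-\tfrac{\rho c_1^2}{2\lambda}(\bI-\bc^{-1}):(\bc-\bI),
\end{equation*}
which is the announced dissipation term and is of definite sign: diagonalising $\bc\in\bS_{+,*}$ with eigenvalues $\mu_i>0$ gives $(\bI-\bc^{-1}):(\bc-\bI)=\sum_i(\mu_i-1)^2/\mu_i\ge0$, so the term drains energy and vanishes exactly at the equilibrium $\bc=\bI$ toward which \eqref{eq:Aeul} relaxes.

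The main obstacle is the volumetric bookkeeping in the assembly step: one must collect the several $\div\bu$ contributions — from $e_0$, from $\log\det\bc$, and from the spherical part of $\bsigma$ — and verify that they cancel against one another so that only $\bsigma:\gbu$ remains. This is exactly the point where the compatibility between $p$, $e_0$, $c_1^2$ and the logarithmic term in \eqref{eq:energy} is essential; getting the factor $c_1^2\rho$ and the signs right here, and correspondingly pinning down the sign of the dissipation on the right-hand side, is the delicate part, the remaining differentiations being routine. Throughout, one also uses that $\bc\in\bS_{+,*}$ is propagated by the flow, so that $\bc^{-1}$ and $\log\det\bc$ stay well defined, which is precisely the hypothesis under which the statement is asserted.
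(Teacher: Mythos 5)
Your route is essentially the paper's, transposed from material to spatial coordinates: the paper likewise splits kinetic from internal energy and differentiates $e$ through $\partial_t e=\partial_t e_0+\frac{c_1^2}{2}(\bI-\bc^{-1}):\partial_t\bc$ (see \eqref{eq:energyestimate}), and your transport identity for $\bc$, the derivatives of $\tr\bc$, $\log\det\bc$ and $e_0$, and the eigenvalue proof of $(\bI-\bc^{-1}):(\bc-\bI)\ge0$ are all correct. The genuine gap is that you defer precisely the step that carries the content of the proposition --- the verification that the $e_0$ and volumetric $\div\bu$ contributions ``reconstruct precisely $\bsigma:\gbu$'' --- and under the hypotheses as literally stated that verification fails. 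Assembling your own pieces with $p+c_1^2\rho=-\partial_{\rho^{-1}}e_0$ gives
\begin{equation*}
\rho\,(\partial_t+\bu\cdot\grad)\,e
=-(p+c_1^2\rho)\,\div\bu+\rho c_1^2\,\bc:\gbu-\rho c_1^2\,\div\bu
-\frac{\rho c_1^2}{2\lambda}(\bI-\bc^{-1}):(\bc-\bI),
\end{equation*}
whereas \eqref{eq:extra} with $\btau=\rho c_1^2(\bc-\bI)$ yields
\begin{equation*}
\bsigma:\gbu=-(p+\rho c_1^2)\,\div\bu+\rho c_1^2\,\bc:\gbu .
\end{equation*}
These differ by $-c_1^2\rho\,\div\bu$, so adding your kinetic balance leaves
$\rho\boldf\cdot\bu-c_1^2\rho\,\div\bu-\frac{\rho c_1^2}{2\lambda}(\bI-\bc^{-1}):(\bc-\bI)$
on the right-hand side: an uncancelled non-conservative term survives, and the claimed identity does not follow.

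What the cancellation actually requires is that the spheric part of the stress in the momentum equation be $\partial_{\rho^{-1}}e_0\,\bI$, i.e.\ that $\bsigma$ be the stress generated by the \emph{full} energy \eqref{eq:energy}: $\bsigma=|\bF|^{-1}(\hat\rho\,\partial_{\bF}e)\bF^T=\partial_{\rho^{-1}}e_0\,\bI+\rho c_1^2(\bF\bA\bF^T-\bI)$. Indeed the $-\tfrac{c_1^2}{2}\log\det\bc$ term in $e$ is what produces the $-\rho c_1^2\div\bu$ above, and it must be matched by the $-\rho c_1^2\bI$ inside $\btau$, leaving the $e_0$-pressure to match the remaining spheric power; so the $p$ of \eqref{eq:extra} has to be $-\partial_{\rho^{-1}}e_0$ itself, not the $p(\rho)$ of the stated relation. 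This silent identification of the momentum stress with $\hat\rho\,\partial_{\bF}e$ is exactly what the paper's proof does when it writes $\partial_t e=\cdots+\grad_{\ba}\bu:\bS/\hat\rho$. Once that is fixed your computation closes, but then --- as you in fact derived --- the relaxation term enters with a \emph{minus} sign, $-\frac{\rho c_1^2}{2\lambda}(\bI-\bc^{-1}):(\bc-\bI)\le0$, in agreement with the paper's own \eqref{eq:energyestimate} and with dissipativity, and opposite to the plus sign displayed in the statement. In short, the two points you flagged as ``delicate'' (the volumetric bookkeeping and the sign of the dissipation) are not routine checks that may be postponed: they are exactly where the statement's hypotheses must be amended for any proof, including yours, to go through, and calling your $-\frac{\rho c_1^2}{2\lambda}(\bI-\bc^{-1}):(\bc-\bI)$ ``the announced dissipation term'' papers over the sign discrepancy instead of resolving it.
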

\begin{proof}
We will show \eqref{eq:energy_spatial_UCM} in material coordinates (the Lagrangian description).
On one hand, computing $\partial_t |\bu|^2 = 2 \bu\cdot\partial_t\bu$ is straightforward. 
One the other hand, using \eqref{eq:momentum_material} 
and $\partial_t\bF=\grad_{\ba}\bu$ one computes
\begin{multline}
\label{eq:energyestimate}
 \partial_t e = \partial_t e_0 + \frac{c_1^2}2(\bI-\bc^{-1}):\partial_t\bc
  = - \frac{\rho c_1^2}{2\lambda}(\bI-\bc^{-1}):(\bc-\bI) + \grad_{\ba}\bu:\bS/\hat\rho
\end{multline}
where 
$(\bI-\bc^{-1}):(\bc-\bI)\ge0$ is a dissipation. QED
\end{proof}
Interestingly, notice that our 
free energy \eqref{eq:energy} is not useful for well-posedness: 
it is not strictly convex in conserved variables.
Morover, our formulation \eqref{eq:momentum_spatial_tens}--\eqref{eq:deformation_spatial_tens}--\eqref{eq:determinant_spatial_tens}--\eqref{eq:Aeul}
for a sound Maxwell model admits the 1D shear waves examined in Sec.~\ref{sec:viscoelastic} as solutions,
so it preserves some well-established 
interesting properties of the standard (incompressible) formulation of Maxwell model.

Let us finally detail the symmetric structure of our hyperbolic formulation for (compressible) viscoelastic flows
of Maxwell-type, with Lagrangian description 
\begin{align}
\partial_t \bu = \div_{\ba} \bS + \boldf
\\
\partial_t |\bF| = 
\div_{\ba} \left( \bC^T \bu \right) 
\\
\partial_t \bC^T =  \nabla_{\ba} \times \left( \bu\times\bF \right) 
\\
\partial_t \bF^T = \nabla_{\ba} \otimes \bu
\\
\partial_t \bA = ( \bF^{-1}\bF^{-T} - \bA ) /\lambda 
\end{align}
where $\bS=- p\:\bC+ \bF\bA$, $p(|\bF|) = |\bF|^{-1} + \frac{d_1^2}{c_1^2} |\bF|^{-\gamma}$,
assuming $c_1^2 \hat\rho\equiv 1$ in \eqref{eq:energy}
$$
e(\bF)=\frac{c_1^2}2 ( F^k_\alpha A^{\alpha\beta} F^k_\beta - 2 \log |F^k_\beta|  ) - \frac{d_1^2}{1-\gamma} |\bF|^{1-\gamma} \,.
$$
To that aim, we consider a 2D system 
when $\lambda\to\infty$:
\begin{align}
\label{eq:start}
& \partial_t u^x+\partial_a\left(F^y_b p -(A^{aa}F^x_a+A^{ab}F^x_b)\right)+\partial_b\left(-F^y_a p -(A^{ab}F^x_a+A^{bb}F^x_b)\right) = 0,
\\
& \partial_t u^y+\partial_a\left(-F^x_b p -(A^{aa}F^y_a+A^{ab}F^y_b)\right)+\partial_b\left(F^x_a p -(A^{ab}F^y_a+A^{bb}F^y_b)\right) = 0,
\\
& \partial_t |\bF| = 
\partial_a\left(-F^x_bu^y+F^y_bu^x\right) + \partial_b\left(-F^y_au^x+F^x_au^y\right), 
\\
& \partial_t F^x_a-\partial_au^x = 0,\\  
& \partial_t F^x_b-\partial_bu^x = 0,\\
& \partial_t F^y_a-\partial_au^y = 0,\\  
& \partial_t F^y_b-\partial_bu^y = 0,\\
& \partial_t Y^{aa}  = \partial_t Y^{ab} =  \partial_t Y^{bb} = 0
\label{eq:end}
\end{align}
where, 
denoting $\Delta=Y^{aa}Y^{bb}-Y^{ab}Y^{ab}$, $\delta = \sqrt{Y^{aa}+Y^{bb}+2\sqrt{\Delta}}$, we have
$$
A^{aa} = \frac{Y^{bb}+\sqrt{\Delta}}\delta\,,\quad
A^{ab} = \frac{-Y^{ab}}\delta\,,\quad
A^{bb} = \frac{Y^{bb}+\sqrt{\Delta}}\delta\,.
$$ 
Rewriting $\partial_tU + \partial_\alpha G_\alpha(U) = 0$ the system above, 
involutions $M_\alpha \partial_\alpha U=0$ hold with 
$$
M_a = 
\begin{pmatrix}
 0 & 0 & 0 & 0 & 1 & 0 & 0 & 0 & 0 & 0 \\
 0 & 0 & 0 & 0 & 0 & 0 & 1 & 0 & 0 & 0 \\
\end{pmatrix}
\;
M_b = 
\begin{pmatrix}
 0 & 0 & 0 &-1 & 0 & 0 & 0 & 0 & 0 & 0 \\
 0 & 0 & 0 & 0 & 0 &-1 & 0 & 0 & 0 & 0 \\
\end{pmatrix}
$$
and
$\partial_t\eta(U) + \partial_\alpha Q_\alpha(U) = 0$ is satisfied for $\eta(U)=\frac{|\bu|^2}2+e$,
using $\Xi(U)^T
=\begin{pmatrix}
p u^y & -p u^x
\end{pmatrix}
$ in $DQ_\alpha(U) = D\eta(U) DG_\alpha(U) + \Xi(U)^TM_\alpha$.

A symmetric formulation is obtained for our quasilinear formulation of Maxwell (compressible) viscoelastic flows
similarly to the standard compressible elastodynamics case:
on premultiplying the system (\ref{eq:start}--\ref{eq:end}) by $D^2\eta(U)$, insofar as the matrix
$(D^2\eta(U) DG_\alpha(U) + D\Xi(U)^TM_\alpha)\nu_\alpha$ is symmetric given a unit vector $\nu=(\nu_a,\nu_b)\in\R^2$.
We do not detail the symmetric matrix $(D^2\eta(U) DG_\alpha(U) + D\Xi(U)^TM_\alpha)\nu_\alpha$ here:
its upper-left block coincides with \eqref{eq:matrix},
but the other blocks are complicate and depend on the choice of the variable $\bY=\bA^{-\frac12}$ (key to exhibit the symmetric-hyperbolic structure using a fundamental convexity result from \cite{lieb-1973} -- Theorem 2 p.276 with $r=\frac12$ and $p=0$) a choice which is not unique (ours may not be optimal).
In any case, the symmetric structure yields a key energy estimate for the construction of unique smooth solutions,
and it also allows one to construct 1D waves 
similarly from 
\eqref{eq:eig} 
when $\lambda\to\infty$ (otherwise one has to take into account the source term of relaxation-type). 


\section{Conclusion and Perpsectives}

Our symmetric-hyperbolic formulation of viscoelastic flows of Maxwell type \cite{Boyaval2021}
allows one to rigorously establish \emph{multidimensional} motions, within the same continuum perspective as elastodynamics
and Newtonian fluid models.
It remains to exploit that mathematically sound framework, e.g. to establish the structural stability of the model
and rigorously unify (liquid) fluid and solid motions through parameter variations in our model:
see \cite{HYP2022Boyaval} 
regarding the nonsingular limit toward elastodynamics.
Another step in that direction is to drive the transition between (liquid) fluid and solid motions more physically,
e.g. on taking into account heat transfers: 
see \cite{S0219891622500096} for a model of Cattaneo-type for the heat flux,
which preserves the symmetric-hyperbolic structure.
%
%
%
%
%
Last, one may want to add physical effects for particular applications: 
the purely Hookean internal energy in \eqref{eq:energy} can be modified
to include finite-extensibility effects as in FENE-P or Gent models, 
or to use another measure of strain, with lower-convected time-rate for instance, see \cite{S0219891622500096}. 

\section*{Acknowledgment} 

The author acknowledges the support of ANR project 15-CE01-0013 SEDIFLO.

He dedicates this work to the memory of Roland Glowinski, an inspiring pioneer.

\bibliographystyle{plain}

\end{document}